\documentclass[12pt]{article}

\usepackage{amsmath}
\usepackage{amsfonts}
\usepackage{graphicx}
\usepackage{amsthm}
\usepackage{amssymb}

\newtheorem{theorem}{Theorem}
\newtheorem{lemma}{Lemma}[theorem]
\newtheorem{corollary}{Corollary}

\newcommand{\ds}{\displaystyle}
\numberwithin{equation}{section}

\title{Asymptotics of the coefficients of polynomials,  asymptotic zero distribution and free probability}
\author{Walter Van Assche \\ KU Leuven, Belgium}
\date{\today}  

\begin{document}

  \maketitle

\begin{abstract}
The asymptotic behavior of the coefficients of a sequence of polynomials is given under the conditions that all the zeros are real and positive and the zeros
have an asymptotic zero distribution on $(0,\infty)$. We show how the limit for the coefficients is related to the Stieltjes transform of the asymptotic zero distribution and make a connection with the  $S$-transform in free probability. We illustrate with some examples involving orthogonal and multiple orthogonal polynomials.
\end{abstract}

\section{Introduction}
Recently Mart\'\i nez-Finkelshtein, Morales and Perales \cite{M-FMP} showed how one can find the asymptotic zero distribution of some multiple orthogonal polynomials
by observing that these multiple orthogonal polynomials are finite free convolutions of polynomials for which the asymptotic zero distribution is known. 
In this paper a different approach is used to find the asymptotic zero distribution by using a result about the asymptotic behavior of the coefficients of polynomials
and by connecting this to the $S$-transform, an important transform in free probability theory (see, e.g., \cite{Speicher}). The $S$-transform often turns out to be the product of simpler $S$-transforms,
so that the asymptotic zero distribution is a free multiplicative convolution of known measures, often appearing as the asymptotic zero distribution of classical orthogonal polynomials.

\subsection{The $S$-transform}
 In this paper we mainly use measures on $[0,\infty)$ because our main theorem (Theorem~\ref{thm:main}) requires the zeros to be positive.
Suppose $\mu$ is a probability measure on $[0,\infty)$.
The \textit{Stieltjes transform} of $\mu$ (or Cauchy transform, often also called Markov function) is given by
\[    G(z) = \int_0^\infty \frac{d\mu(x)}{z-x}, \qquad z \in \mathbb{C} \setminus [0,\infty). \]
In this paper we mainly use measures on $[0,\infty)$ because our main theorem (Theorem~\ref{thm:main}) requires the zeros to be positive.
The \textit{moment generating function} (without zero-th moment) is given by
\[    M(z) = \frac{1}{z} G(1/z) -1 = \int_0^\infty \frac{zx}{1-zx} \, d\mu(x) = \sum_{n=1}^\infty m_n z^n,  \]
where the moments are given by
\[     m_n = \int_0^\infty  x^n \, d\mu(x).  \]
We assume that all the moments are finite and that $M$ has radius of convergence $R > 0$. Note that $m_0=1$ and $M(0)=0$.
The \textit{$S$-transform} is then given by
\[     S(u) = \frac{u+1}{u} M^{-1}(u).  \]
This function is well-defined in a neighborhood of $0$.
If the $S$-transform of a measure $\mu$ is known in a neighborhood of $0$, then one can find the moment generating function $M$ in a neighborhood
of $0$ and hence also the Stieltjes transform $G$ in a neighborhood of $\infty$. By using the analytic extension of $G$ and the Stieltjes inversion theorem, one can then determine the measure $\mu$ on the real line.

An important property of the $S$-transform is the following: if $S_1$ and $S_2$ are $S$-transforms of two measures $\mu_1$ and $\mu_2$, then the product
$S(z)=S_1(z)S_2(z)$ is also the $S$-transform of a measure, which is known as the free multiplicative convolution $\mu = \mu_1 \boxtimes \mu_2$.

\section{Main result}

Let $\{ p_n(x), n \in \mathbb{N}\}$ be monic polynomials with only positive zeros
\[   p_n(x) = (x-x_{1,n})(x-x_{2,n}) \cdots (x-x_{n,n}) = \sum_{k=0}^n (-1)^k a_{k,n}  x^{n-k}. \]
The relation between the zeros $\{x_{i,n}, 1 \leq i \leq n\}$ and the coefficients of the polynomial $p_n$ is well known (Vi\`ete relations):
\begin{eqnarray*}   
                       a_{0,n} &=& 1, \\
                       a_{1,n} &=&  \sum_{j=1}^n x_{j,n}, \\[-10pt]
                       &\vdots & \\[-2pt]
                       a_{k,n} & = & \sum_{i_1 < i_2 < \cdots < i_k} x_{i_1,n}x_{i_2,n}\cdots x_{i_k,n}   
                                    =\frac{1}{k!} \sum_{i_1\neq i_2 \neq \cdots \neq i_k}  x_{i_1,n}x_{i_2,n}\cdots x_{i_k,n},   \\[-10pt]  
                                    & \vdots & \\[-2pt]
                       a_{n,n} &=& x_{1,n}x_{2,n} \cdots x_{n,n}.
\end{eqnarray*}
This relation between the coefficients and the zeros  is highly non-linear. In \cite{WVAFO} the following result was proved for polynomials with only negative real zeros.
The formulation given below is for polynomials with only positive real zeros. \newpage

\begin{theorem}[Van Assche-Fano-Ortolani 1987]  \label{thm:main}
Let $\mu_n$ be the zero counting measure
\[   \mu_n = \frac{1}{n}  \sum_{k=1}^n \delta_{x_{k,n}}. \]
Suppose that $\mu_n \to \mu$ in weak convergence and $\mu$ has no mass at $0$. Then there exists a concave and
differentiable function $g$ on $[0,1]$ such that
\begin{equation}  \label{lim-g}
      \lim_{\frac{k}{n} \to t} \frac1n \log a_{k,n} = g(t), 
\end{equation}
and
\begin{equation}    \label{lim-f}
    \lim_{\frac{k}{n} \to t} \frac{a_{k,n}}{a_{k-1,n}} = e^{g'(t)} = f(t).  
\end{equation}
The inverse of $f$ is given by
\[   t = f^{-1}(x) = \int_0^\infty \frac{y}{x+y}\, d\mu(y) = 1 - x \int_0^\infty \frac{d\mu(y)}{x+y},  \]
and
\[   g(t) = -(1-t) \log f(t) + \int_0^\infty \log[f(t)+y]\, d\mu(y).  \]
\end{theorem}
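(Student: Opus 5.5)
The plan is to treat the coefficients as elementary symmetric functions and extract their exponential growth by a saddle-point/Legendre-transform argument, using the generating polynomial
\[
   P_n(s) = \prod_{j=1}^n (1 + s\, x_{j,n}) = \sum_{k=0}^n a_{k,n} s^k , \qquad s>0,
\]
which has only nonnegative coefficients because all zeros are positive. First I would show that for every fixed $s>0$
\[
   \frac1n \log P_n(s) = \int_0^\infty \log(1+sy)\, d\mu_n(y) \to \Phi(s) := \int_0^\infty \log(1+sy)\, d\mu(y).
\]
The integrand is continuous but unbounded, so weak convergence alone does not give this. I expect this to be the main technical obstacle. Finiteness of $\Phi$ needs some tail control. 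One option is to assume, as is implicit in the paper, that the zeros lie in a fixed compact set or that $\int \log(1+y)\,d\mu_n$ is uniformly integrable. The other is to work instead with the truncated quantity $\log(f+y)-\log y$ after factoring out $a_{n,n}=\prod x_{j,n}$. Near $0$ the integrand is bounded, and the hypothesis that $\mu$ has no mass at $0$ is what is needed at the endpoint $t=1$, where $a_{n,n}$ dominates and $\frac1n\log a_{n,n}=\int\log y\,d\mu_n$ must converge.

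Next, set $c_{k,n}=\frac1n\log a_{k,n}$. Newton's inequalities for elementary symmetric functions of positive numbers give $a_{k,n}^2 \ge a_{k-1,n}a_{k+1,n}\cdot(\text{factor }1+O(1/k))$, so $k\mapsto c_{k,n}$ is asymptotically concave in $k/n$. By a Helly/Arzel\`a-type selection argument for concave functions, every subsequence has a further subsequence along which $c_{\lfloor tn\rfloor,n}\to \tilde g(t)$ locally uniformly on $(0,1)$, with $\tilde g$ concave. Since the coefficients are positive and $n$ grows, the Laplace principle gives
\[
   \Phi(s)=\lim \frac1n\log\sum_k a_{k,n}s^k = \sup_{t\in[0,1]}\bigl(\tilde g(t)+t\log s\bigr).
\]
Writing $\sigma=\log s$, this says $\Phi(e^\sigma)$ is the Legendre transform of $-\tilde g$. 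Because $\tilde g$ is concave, Legendre inversion gives $\tilde g(t)=\inf_\sigma(\Phi(e^\sigma)-t\sigma)$. The limit is therefore independent of the subsequence, which proves \eqref{lim-g} with $g=\tilde g$. The endpoints $t=0$ and $t=1$ need separate care: $a_{0,n}=1$, and $a_{n,n}$ is handled via $\mu(\{0\})=0$ as above.

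To identify the optimizer, differentiate in $\sigma$. The function $\Phi(e^\sigma)$ is strictly convex and smooth in $\sigma$ (unless $\mu$ is a point mass), with derivative
\[
   \frac{d}{d\sigma}\Phi(e^\sigma)=\int \frac{sy}{1+sy}\,d\mu(y),
\]
which increases from $0$ to $1-\mu(\{0\})=1$. Strict convexity makes $g$ differentiable. The infimum is attained where $t=\int \frac{sy}{1+sy}d\mu(y)$ and $g'(t)=-\sigma=\log(1/s)$. Setting $f(t)=e^{g'(t)}=1/s$ gives $t=\int \frac{y}{f+y}\,d\mu(y)$, which is the stated formula for $f^{-1}$. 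Substituting back,
\[
   g(t)=\Phi(1/f)+t\log f=\int\log(f+y)\,d\mu(y)-\log f+t\log f,
\]
which is the claimed expression.

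Finally, \eqref{lim-f} follows from \eqref{lim-g} together with concavity. Newton's inequality says the ratios $a_{k,n}/a_{k-1,n}$ are essentially decreasing in $k$. Average them over a window $k\le j\le k+\varepsilon n$ to get the difference quotient $(g(t+\varepsilon)-g(t))/\varepsilon$, and use monotonicity to sandwich the individual ratio between the difference quotients on the two sides. Letting $\varepsilon\to0$ and using differentiability of $g$ gives $a_{k,n}/a_{k-1,n}\to e^{g'(t)}$.
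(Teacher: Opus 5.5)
Your proposal is correct and follows essentially the paper's argument. You bound $\sum_k a_{k,n}s^k=\prod_j(1+s\,x_{j,n})$, which is $x^{-n}(-1)^np_n(-x)$ with $s=1/x$, between its largest term and $n+1$ times that term to get the Legendre-type identity, and differentiate it to get $f^{-1}$ and $g$. Your Newton-inequality log-concavity, Legendre inversion of subsequential limits, and monotone-ratio sandwich give rigorous proofs of two things the sketch only asserts: the existence of $g$, and the existence of the ratio limit (the paper's step $\int_0^t\log f(s)\,ds=g(t)$ presupposes that $f$ exists).

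You are right that weak convergence alone does not give $\frac1n\log P_n(s)\to\Phi(s)$; the paper uses this tacitly. But $\mu(\{0\})=0$ does not control $\frac1n\log a_{n,n}$ either: a single zero at $e^{-n^2}$ breaks \eqref{lim-g} at $t=1$. So under your added tail hypothesis, what you and the paper actually obtain is \eqref{lim-g} away from the endpoint $t=1$.
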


An important consequence is
\begin{corollary}  \label{cor:S}
Let $\mu$ be the asymptotic distribution of the zeros of $p_n$, then
%\[     \widehat{M}^{-1}(u) = \frac{-1}{f(-u)}, \]
 the $S$-transform of $\mu$ is
\[    S(u) = -\frac{u+1}{u} \frac{1}{f(-u)}.   \]
\end{corollary}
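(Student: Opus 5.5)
The proof is a direct computation from the formula for $f^{-1}$ in Theorem~\ref{thm:main}, matched against the definitions of $M$ and $S$. The aim is to express $M^{-1}(u)$ through $f$ and then substitute into $S(u) = \frac{u+1}{u} M^{-1}(u)$.

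First, rewrite $M$ in a form comparable to $f^{-1}$. By definition,
\[ M(z) = \int_0^\infty \frac{zx}{1-zx}\, d\mu(x). \]
Substitute $z = -1/s$ with $s>0$. Then
\[ \frac{zx}{1-zx} = \frac{-x/s}{1+x/s} = -\frac{x}{s+x}, \]
so
\[ M(-1/s) = -\int_0^\infty \frac{x}{s+x}\, d\mu(x) = -f^{-1}(s) \]
by Theorem~\ref{thm:main}. Setting $t = f^{-1}(s)$, i.e.\ $s = f(t)$, gives the identity
\[ M\bigl(-1/f(t)\bigr) = -t, \qquad 0<t<1. \]
Putting $u=-t$ yields $M(-1/f(-u)) = u$, hence
\[ M^{-1}(u) = -\frac{1}{f(-u)}. \]
Multiplying by $\frac{u+1}{u}$ gives the claimed formula $S(u) = -\frac{u+1}{u}\frac{1}{f(-u)}$.

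The main obstacle is that this identity holds only for $u\in(-1,0)$, whereas $S$ is defined as an analytic function in a neighborhood of $0$. The plan has three parts.

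\textbf{Monotonicity.} The map $s\mapsto f^{-1}(s)=\int \frac{y}{s+y}\,d\mu(y)$ is strictly decreasing on $(0,\infty)$. It takes values in $(0,1)$, since $\mu$ has no mass at $0$. So $f$ is a decreasing bijection from $(0,1)$ onto $(0,\infty)$.

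\textbf{Behaviour near $t=0$.} As $t\to 0^+$, $f(t)\to\infty$, so the point $z=-1/f(t)$ lies in $(-\infty,0)$ and tends to $0^-$. Since $M$ has radius of convergence $R>0$, for $t$ small we have $|z|<R$ and $M$ is analytic there. Moreover $M'(0)=m_1>0$, so $M$ is locally invertible near $0$. It follows that $M^{-1}(u) = -1/f(-u)$ for small $u<0$.

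\textbf{Extension.} On this interval $-1/f(-u)$ coincides with the analytic function $M^{-1}(u)$, so the formula extends by analytic continuation (uniqueness). The right-hand side of the corollary is understood in that sense near $u=0$. On $(-1,0)$ the identity holds wherever $M$ is defined, through the analytic continuation of $G$.

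I would also check consistency at $u\to 0$. As $s\to\infty$, $f^{-1}(s)\sim m_1/s$, so $f(t)\sim m_1/t$. Then
\[ S(u) \to -\frac{1}{u}\cdot\frac{-u}{m_1} = \frac{1}{m_1}, \]
which is the correct value $S(0)=1/m_1$.
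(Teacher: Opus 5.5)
Your argument is correct and matches the paper's own (implicit) derivation: substituting $z=-1/x$ into $M(z)=\int\frac{zx}{1-zx}\,d\mu(x)$ gives $M(-1/x)=-f^{-1}(x)$, equivalently $f^{-1}(x)=1+xG(-x)$ from the second form of $f^{-1}$ in Theorem~\ref{thm:main}, hence $M^{-1}(u)=-1/f(-u)$ and the stated formula for $S$. Your extra care about where the identity holds (first on $(-1,0)$, then near $u=0$ via local invertibility of $M$ and analytic continuation) and your check $S(0)=1/m_1$ are sound refinements that the paper leaves unstated.
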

This will be our main tool for obtaining the asymptotic distribution of the zeros of some orthogonal and multiple orthogonal polynomials.

We will give a sketch of the proof of Theorem \ref{thm:main} with the main ingredients. For the full proof we refer to \cite{WVAFO}.
\begin{proof}
Introduce a sequence of continuous functions\footnote{Observe that in \cite{WVAFO} there is a misprint in equation (13), which has $\frac{j}{n+1}$ instead of $\frac{j+1}{n}$.}
\[  g_n(t) = \begin{cases}     \ds \frac{1}{n} \log a_{j,n}, & \ds \textrm{if } t = \frac{j}{n}, \\[10pt]
               \ds   \frac{1}{n} \log a_{j,n} + \left( t- \frac{j}{n} \right) ( \log a_{j+1,n} -\log a_{j,n}), & \ds \textrm{if } \frac{j}{n} < t < \frac{j+1}{n}.
  \end{cases} \]
Then one can show that  the sequence $\bigl(g_n)_{n \in \mathbb{N}}$ converges uniformly on every compact subset of $(0,1)$ to a concave continuous function $g$.
This gives the existence of a function $g$ for which \eqref{lim-g} holds. 
Observe that
\[   \prod_{k=1}^j \frac{a_{k,n}}{a_{k-1,n}} = a_{j,n} \qquad (a_{0,n} = 1). \]
Taking logarithms gives
\[    \sum_{k=1}^j  \log \frac{a_{k,n}}{a_{k-1,n}} = \log a_{j,n}. \]
This can be rewritten as
\[     \int_0^t \log \frac{a_{[sn],n}}{a_{[sn]-1,n}} \, ds = \frac{1}{n}  \log a_{j,n},   \qquad    j = [nt] .  \]
Taking the limit $n \to \infty$ then gives
\[   \int_0^t  \log f(s)\, ds = g(t).  \]
Differentiate this to find
\[      \log f(t) = g'(t) , \]
so that $f(t) = e^{g'(t)}$.  

Note that 
\[           (-1)^n p_n(-x) = \sum_{k=0}^n a_{k,n} x^{n-k} > 0 , \qquad x > 0.  \]
so that for every $k$
\[                            a_{k,n} x^{n-k} \leq  (-1)^n p_n(-x)     \]
and thus also
\[               \max_{0 \leq k \leq n}     a_{k,n} x^{n-k} \leq  (-1)^n p_n(-x).     \]
On the other hand
\[                                      (-1)^n p_n(-x) \leq (n+1)   \max_{0 \leq k \leq n}     a_{k,n} x^{n-k} .  \]
Taking logarithms gives
\begin{multline*}       
   \max_{0 \leq k \leq n}  \left(  \log a_{k,n} + (n-k) \log x \right)  \leq  \log (-1)^n p_n(-x)  \\
 \leq \log (n+1) +     \max_{0 \leq k \leq n}     \left(\log a_{k,n} + (n-k) \log x \right).  
 \end{multline*}
Divide by $n$ and let $n \to \infty$ to find
\[  \max_{0 \leq t \leq 1} \bigl( g(t) + (1-t) \log x \bigr) = \int_0^\infty \log (x+y) \, d\mu(y).    \]
The maximum is attained at  $t^*$ for which $g'(t^*) - \log x = 0$ (there is only one maximum on $[0,1]$).
Let $f(t) = e^{g'(t)}$ then $f$ is a decreasing function. 
Call its inverse $h$ so that $f(t)=x$ corresponds to $h(x) = t$.
Note that $g'(h(x)) = \log x$ so that the maximum is attained at $t^* = h(x)$ and
\begin{equation}   \label{gh}
        g(h(x)) + (1-h(x)) \log x = \int_0^\infty \log(x+y)\, d\mu(y).  
\end{equation}
This gives
\[     g(t) + (1-t) \log f(t) = \int_0^\infty \log[f(t)+y]\, d\mu(y).   \]
Taking the derivative of \eqref{gh}  gives
\[   g'(h(x)) h'(x) - h'(x) \log x + \frac{1-h(x)}{x} = \int_0^\infty \frac{d\mu(y)}{x+y} , \]
which is the same as
\[       1-h(x) = \int_0^\infty \frac{x}{x+y} \, d\mu(y)    \]
or
\[    f^{-1}(x) = 1-  \int_0^\infty \frac{x}{x+y} \, d\mu(y).  \]
\end{proof}

\section{Examples}
\subsection{One multiple zero}   \label{ex:3.1}
A very simple example is the sequence of polynomials $p_n(x)=(x-a)^n$, with $a>0$. Then
\[   p_n(x) = \sum_{k=0}^n \binom{n}{k} (-1)^k a^k x^{n-k},   \]
so that
\[   a_{k,n} = a^k \binom{n}{k}  \] 
and
\[  \lim_{\frac{k}{n} \to t}   \frac{a_{k,n}}{a_{k-1,n}} = \lim_{\frac{k}{n} \to t} a \frac{n-k+1}{k} = a \frac{1-t}{t}. \]
The function $f$ is thus given by
\[ f(t) = a \frac{1-t}{t}, \]
and by Corollary \ref{cor:S}
\[    S(u) =  - \frac{u+1}{u} \frac{-u}{1+u} \frac{1}{a} = \frac{1}{a}.  \]
This is the $S$-transform of a Dirac measure at $a$.
Furthermore we have
\[    \lim_{\frac{k}{n} \to t} \frac1n \log a_{k,n} = t \log a - t \log t - (1-t) \log(1-t), \]
so that the function $g$ is given by
\[   g(t) = t \log a -t \log t -(1-t) \log(1-t). \]
This is indeed a concave and differentiable function on $[0,1]$, see Figure \ref{fig1}.

\begin{figure}[ht]
\centering
\includegraphics[width=5in]{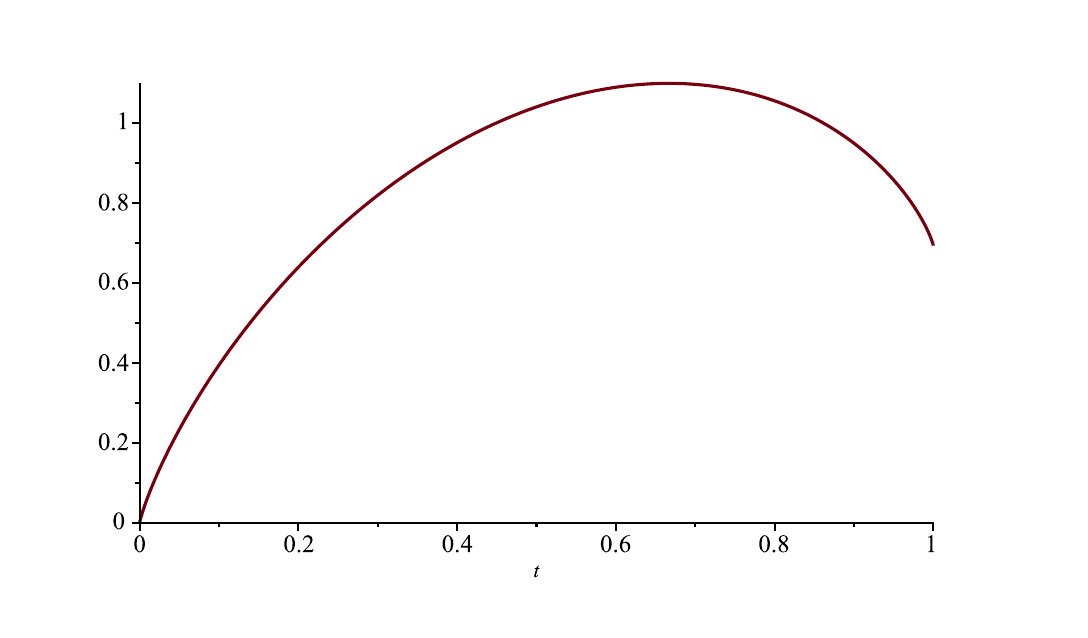}
\caption{The function $g$ for Example \ref{ex:3.1}  (with $a=2$).}
\label{fig1}
\end{figure}

\subsection{Laguerre polynomials}
The Laguerre polynomials $L_n^{(\alpha)}$ are classical orthogonal polynomials on $[0,\infty)$ for the gamma distribution with density $w(x)=x^\alpha e^{-x}$ ($\alpha > -1$).
A natural scaling is to look at the zeros of $L_n^{(\alpha)}(nx)$, so that we look at the asymptotic distribution of the scaled zeros $\{ x_{k,n}/n, 1 \leq k \leq n \}$.
An explicit expression for the monic Laguerre polynomials is \cite[Eq. (5.1.6)-(5.1.8)]{Szego}
\[   L_n^{(\alpha)}(nx) = \frac{n!}{n^n} \sum_{k=0}^n \binom{n+\alpha}{k} \frac{(-1)^k}{(n-k)!} (nx)^{n-k}.  \]
The coefficients are thus given by
 \[   a_{k,n} = \frac{n!}{n^k} \binom{n+\alpha}{k} \frac{1}{(n-k)!},   \]
and
 \[  \lim_{\frac{k}{n} \to t}   \frac{a_{k,n}}{a_{k-1,n}} =  \lim_{\frac{k}{n} \to t} \frac{(n+\alpha-k+1)(n-k+1)}{kn} = \frac{(1-t)^2}{t}.  \]
The function $f$ is therefore
\[ f(t) = \frac{(1-t)^2}{t}   \]
and by Corollary \ref{cor:S} the $S$-transform of the asymptotic distribution of the scaled zeros is
 \[   S(u) = - \frac{u+1}{u} \frac{-u}{(1+u)^2} = \frac{1}{u+1}.  \]
This is the $S$-transform of the \textit{Marchenko-Pastur distribution} on $[0,4]$ (Figure \ref{fig2}) with density
 \[    w(x) = \frac{1}{2\pi} \frac{\sqrt{4-x}}{\sqrt{x}}, \qquad x \in [0,4].  \]

\begin{figure}[ht]
 \centering
 \includegraphics[width=5in]{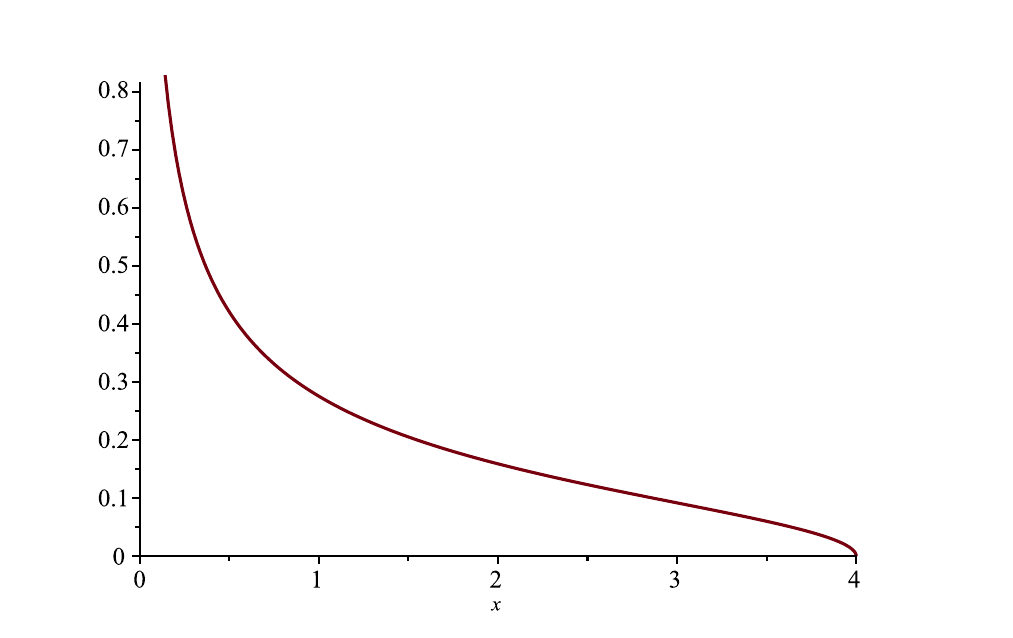}
 \caption{The Marchenko-Pastur distribution on $[0,4]$.}
 \label{fig2}
\end{figure}

We can also find what happens when the parameter $\alpha$ depends on the degree $n$. 
For Laguerre polynomials $L_n^{(\alpha)}(nx)$ with $\alpha/n \to a > 0$ one has
 \begin{eqnarray*}   \lim_{\frac{k}{n} \to t}   \frac{a_{k,n}}{a_{k-1,n}} &=&  \lim_{\frac{k}{n} \to t} \frac{(n+\alpha-k+1)(n-k+1)}{kn}  \\ 
  &=& \frac{(1+a-t)(1-t)}{t},  
  \end{eqnarray*} 
so that
\[ f(t) = \frac{(1+a-t)(1-t)}{t}.  \]
The $S$-transform then becomes
\[   S(u) = -\frac{u+1}{u} \frac{-u}{(1+a+u)(1+u)} = \frac{1}{1+a+u}.  \]
This is the \textit{Marchenko-Pastur distribution} MP$(a)$ on $[(\sqrt{a+1}-1)^2,(\sqrt{a+1}+1)^2]=[ \lambda_-,\lambda_+]$, where $a >0$, see Figure \ref{fig3}.
 The density is
\[   w(x) =  \frac{1}{2\pi} \frac{\sqrt{(x-\lambda_-)(\lambda_+-x)}}{x}, \qquad   \lambda_- < x < \lambda_+ ,  \]
with
\[  \lambda_{\pm} = (\sqrt{a+1} \pm 1)^2.  \]

\begin{figure}[ht]
 \centering
 \includegraphics[width=5in]{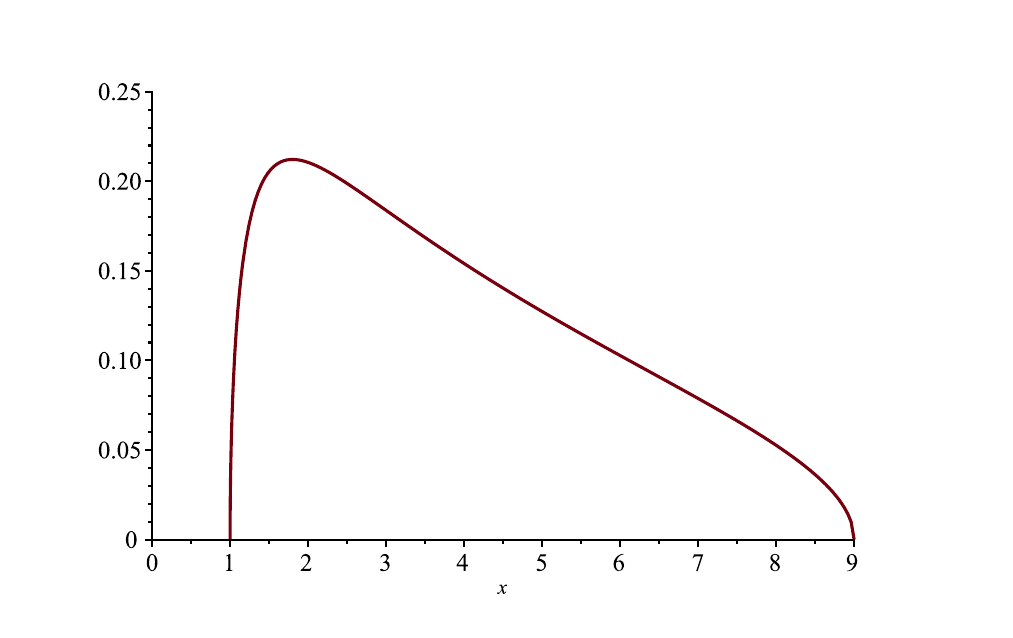}
\caption{The Marchenko-Pastur distribution with $a=3$.}
\label{fig3}
\end{figure}

Observe that the zeros have moved away from the origin $0$.
The asymptotic distribution of the zeros of Laguerre polynomials $L_n^{(an+\alpha)}(nx)$ was given by Gawronski in \cite{Gawronski}. 

\subsection{Jacobi-Pi\~neiro polynomials}

\begin{figure}[ht]
\centering
\includegraphics[width=5in]{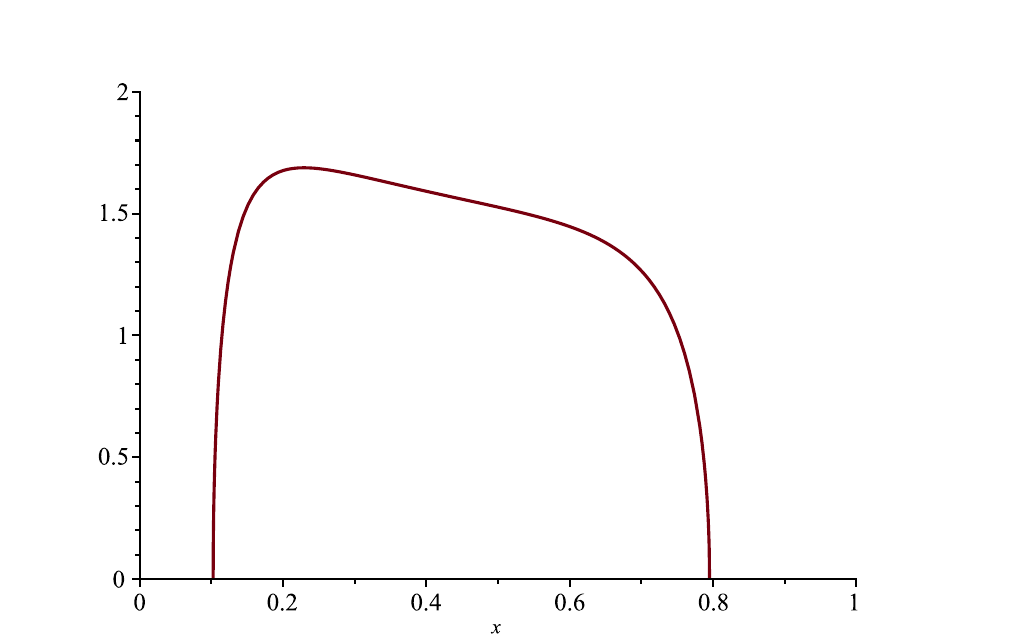}
\caption{The Kesten-McKay distribution for $a=2$ and $b=3$. }
\label{fig4}
\end{figure}
Now an example with multiple orthogonal polynomials. For the type II Jacobi-Pi\~neiro polynomials one has the orthogonality relations 
\[     \int_0^1 P_{\vec{n}}(x) x^{\alpha_j} (1-x)^\beta x^k\, dx = 0, \qquad 0 \leq k \leq n_j-1, \quad 1 \leq j \leq r, \]
where $r \geq 1$. We use the multi-index  $\vec{n} = (n_1,\ldots,n_r)$ and the polynomial has degree $|\vec{n}|=n_1+\cdots+n_r$.
For $\beta=0$ one has the explicit expression \cite[Eq. (23.3.5) on p. 627]{Ismail}
\[   P_{\vec{n}}(x) = C_{\vec{n}}  \  {}_{r+1}F_r\left( \left. \begin{array}{c} -|\vec{n}|,\alpha_1+n_1+1,\ldots,\alpha_r+n_r+1 \\
                                                                                                                    \alpha_1+1,\ldots,\alpha_r+1  \end{array} \right| x \right), \]
with $C_{\vec{n}}$ a normalizing constant that makes this a monic polynomial. The coefficients are thus given by
\[   a_{k,\vec{n}} = C_{\vec{n}} \binom{|\vec{n}|}{k} \frac{(\alpha_1+n_1+1)_{|\vec{n}|-k} \cdots (\alpha_r+n_r+1)_{|\vec{n}|-k}}
                                                         {(\alpha_1+1)_{|\vec{n}|-k} \cdots (\alpha_r+1)_{|\vec{n}|-k}} ,  \]
and one has
\[    \frac{a_{k,\vec{n}}}{a_{k-1,\vec{n}}} = \frac{|\vec{n}|-k+1}{k} \frac{(\alpha_1+1+|\vec{n}|-k)\cdots(\alpha_r+1+|\vec{n}|-k)}
                                           {(\alpha_1+n_1+1+|\vec{n}|-k)\cdots(\alpha_r+n_r+1+|\vec{n}|-k)} . \]                            
If $n_j/|\vec{n}| \to q_j$ as $|\vec{n}| \to \infty$, where $q_1+\cdots +q_r=1$, then
\[   \lim_{k/|\vec{n}| \to t}  \frac{a_{k,\vec{n}}}{a_{k-1,\vec{n}}} = \frac{1-t}{t} \frac{(1-t)^r}{(1+q_1-t)\cdots(1+q_r-t)}.  \]
The function $f$ is given by
\[    f(t) = \frac{1-t}{t} \frac{(1-t)^r}{(1+q_1-t)\cdots(1+q_r-t)}, \]
and by Corollary \ref{cor:S} the $S$-transform of the asymptotic distribution of the zeros is
\[   S(u) =  \prod_{j=1}^r \frac{1+q_j+u}{1+u}.  \]
This is the product of $r$ easier $S$-transforms
\[  S_j(u) = \frac{1+q_j+u}{1+u} , \]
and each of these is the $S$-transform of the  distribution
\[   \mu_j = (1-q_j) \delta_1 + q_j \textup{KM}\left( 0,\frac{1-q_j}{q_j}\right), \]
where KM$(a,b)$ is the \textit{Kesten-McKay distribution} with parameters $a,b > 0$. 
Hence the asymptotic distribution of the zeros is the $r$-fold free multiplicative convolution
\[    \mu = \mu_1 \boxtimes \mu_2 \boxtimes \cdots \boxtimes \mu_r.  \]
The density of KM$(a,b)$, $a,b \geq 0$ is given by (see Figure \ref{fig4})
\[    w(x)  = \frac{2}{C\pi} \frac{\sqrt{(x-\lambda_-)(\lambda_+-x)}}{x(1-x)} , \qquad \lambda_- < x < \lambda_+, \]
with
\[   \lambda_{\pm} = \frac{(a+1)(a+b+1)+b+1 \pm 2 \sqrt{(a+1)(b+1)(a+b+1)}}{(a+b+2)^2}.  \]
The normalization constant is
\[  C = (\sqrt{\lambda_+}-\sqrt{\lambda_-})^2 + (\sqrt{1-\lambda_-}-\sqrt{1-\lambda_+})^2.  \]

The density for the asymptotic distribution for general $r$ and $q_1=\ldots=q_r=1/r$ was obtained in \cite[Theorem 1.1]{NeuschelWVA},
 see also \cite[Theorem D]{NikiSor} for a similar result. 
The asymptotic zero distribution $r=2$ and $q_1=q_2=1/2$ has density (Figure \ref{fig6})
\[ w(x) = \frac{\sqrt{3}}{4\pi} \, \frac{\left(1+\sqrt{1-x}\right)^{\frac{1}{3}}+\left(1-\sqrt{1-x}\right)^{\frac{1}{3}}}{  \,x^{\frac{2}{3}} \sqrt{1-x}},  \]
see, e.g., \cite[Thm. 2.5]{Cous2WVA}. Note that for $r=1$ (in which case $q_1=1$) the $S$-transform is
\[    S(u) = \frac{2+u}{1+u},  \]
which is the $S$-transform of KM$(0,0)$ and this is the arcsine distribution on $[0,1]$ with density
\[      w(x) = \frac{1}{\pi \sqrt{x(1-x)}}, \qquad x \in (0,1).   \]

\begin{figure}[ht]
\centering
\includegraphics[width=5in]{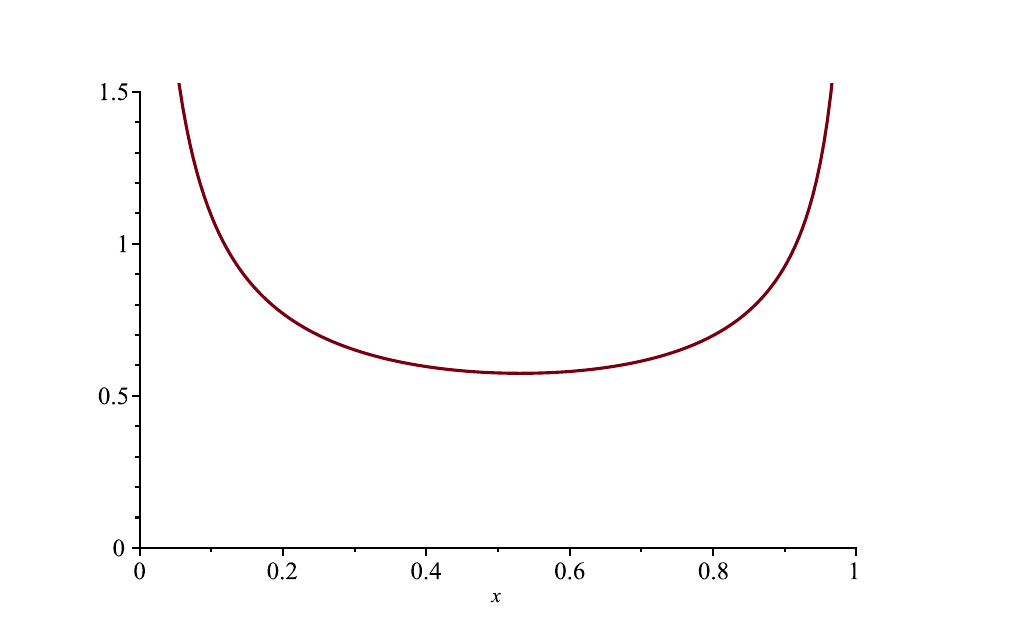}
\caption{The asymptotic zero density for Jacobi-Pi\~neiro polynomials ($r=2$) with $q_1=q_2=1/2$.}
\label{fig6}
\end{figure}

We can also find what happens if the parameters $\alpha_j$ depend on the degree of the polynomial.     
If $\alpha_j/|\vec{n}| \to a_j >0$ then
\[   \lim_{k/|\vec{n}| \to t}  \frac{a_{k,\vec{n}}}{a_{k-1,\vec{n}}} = \frac{1-t}{t} \frac{(a_1+1-t)\cdots(a_r+1-t)}{(a_1+1+q_1-t)\cdots(a_r+1+q_r-t)}, \]
so that the function $f$ is
\[   f(t) = \frac{1-t}{t} \frac{(a_1+1-t)\cdots(a_r+1-t)}{(a_1+1+q_1-t)\cdots(a_r+1+q_r-t)}.  \]
The $S$-transform then becomes
\[   S(u) = \prod_{j=1}^r \frac{a_j+q_j+1+u}{a_j+1+u}, \]
and this is the $S$-transform of the $r$-fold free multiplicative convolution
\[     \mu =  \mu_1 \boxtimes \mu_2 \boxtimes \cdots \boxtimes \mu_r  \]
with
\[     \mu_j = (1-q_j) \delta_1 + q_j \textup{KM}\left( \frac{a_j}{q_j}, \frac{1-q_j}{q_j} \right).   \]
Note that for $r=1$ the asymptotic distribution of the zeros of the Jacobi polynomials $P_n^{(an+\alpha,bn+\beta)}$ was obtained by Gawronski and Shawyer 
\cite{Gawronski-Shawyer}. Again the zeros have moved away from the origin.

For the type I Jacobi-Pi\~neiro polynomials with $r=2$ one has polynomials $A_{n,m}$ (of degree $n-1$) and $B_{n,m}$ (of degree $m-1$) for which
\[    \int_0^1 [A_{n,m}(x)x^{\alpha_1} + B_{n,m}(x)x^{\alpha_2}] (1-x)^\beta x^k\, dx = 0, \qquad 0 \leq k \leq n+m-2. \]
An explicit expression is given in \cite{BDFM}:
\[     A_{n,m}(x) = C_{n,m} \  {}_3F_2 \left( \left. \begin{array}{c}  -n+1, \alpha_1+\beta+n+m,\alpha_1-\alpha_2-m+1 \\
                                                                                              \alpha_1+1, \alpha_1-\alpha_2+1  \end{array} \right| x \right) , \]
\[     B_{n,m}(x) = D_{n,m} \  {}_3F_2 \left( \left. \begin{array}{c}  -m+1, \alpha_2+\beta+n+m,\alpha_2-\alpha_1-n+1 \\
                                                                                              \alpha_2+1, \alpha_2-\alpha_1+1  \end{array} \right| x \right).  \]
These polynomials may have complex zeros. If $n=m$ then the zeros are negative, so therefore we look at the zeros of $A_{n+1,n+1}(-x)$, the analysis for $B_{n+1,n+1}(-x)$
is similar. We have
\[    A_{n+1,n+1}(-x) = \sum_{k=0}^n (-1)^k a_{k,n} x^{n-k}, \]
with
\[     a_{k,n} = C_{n+1,n+1} (-1)^{n-k}\binom{n}{k} \frac{(\alpha_1+\beta+2n+2)_{n-k} (\alpha_1-\alpha_2-n)_{n-k}}{(\alpha_1+1)_{n-k}(\alpha_1-\alpha_2+1)_{n-k}}, \]
with $C_{n+1,n+1}$ a normalizing constant. We then find (for $\alpha_1$ and $\alpha_2$ fixed)
\[   \lim_{k/n\to t} \frac{a_{k,n}}{a_{k-1,n}} = \frac{1-t}{t} \frac{(1-t)^2}{(3-t)t}, \]
so that
\[    f(t) = \frac{1-t}{t} \frac{(1-t)^2}{(3-t)t}.   \]
The $S$-transform is then
\[   S(u) = \frac{-u(3+u)}{(1+u)^2} = \frac{3+u}{1+u} \ \frac{-u}{1+u}.  \]
The asymptotic distribution of the zeros of $A_{n+1,n+1}(-x)$ is the free multiplicative convolution $\mu_1 \boxtimes \mu_2$, 
where $\mu_1$ is the Kesten-McKay distribution KM$(0,1)$  and $\mu_2$ the measure on $[0,\infty)$ with density (see Figure \ref{fig5})
\begin{equation}  \label{eq:JPI}
   w(x) = \frac{1}{\pi} \frac{1}{\sqrt{x}(x+1)}, \qquad x > 0. 
\end{equation} 
Note that the measure $\mu_2$, and hence also $\mu = \mu_1 \boxtimes \mu_2$, has unbounded support $[0,\infty)$ which means that the zeros of $A_{n+1,n+1}$ are dense on $(-\infty,0]$.

\begin{figure}[ht]
\centering
\includegraphics[width=5in]{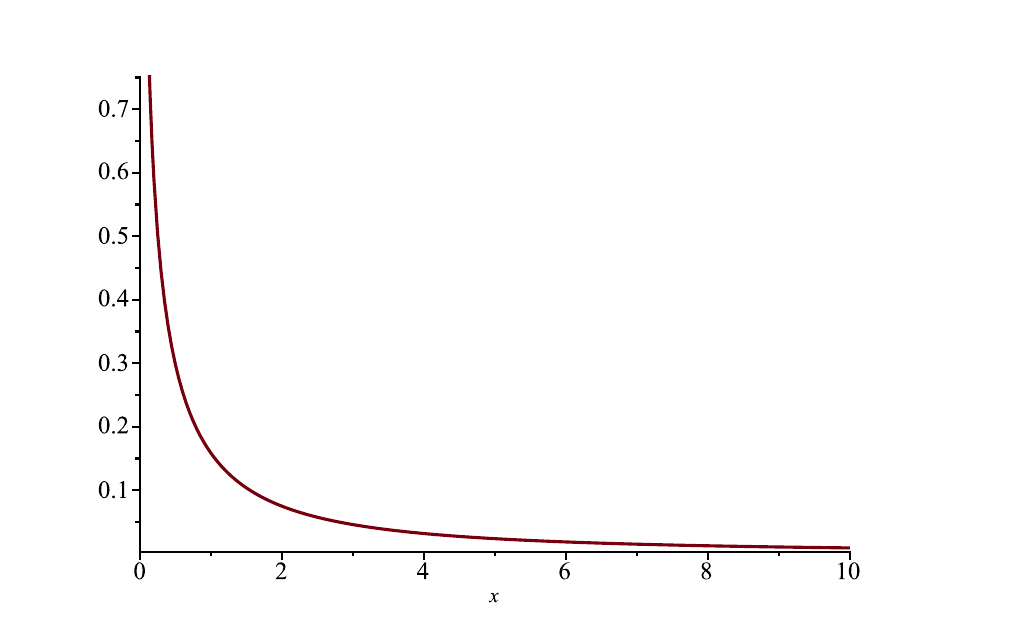}
\caption{The density \eqref{eq:JPI} for the measure $\mu_2$ for type I Jacobi-Pi\~neiro polynomials.}
\label{fig5}
\end{figure}

\subsection{Multiple OP with modified Bessel weights $(K_\nu,K_{\nu+1})$}  \label{ex:3.4}

These multiple orthogonal polynomials were introduced in \cite{WVA-Yakubovich}. We restrict the multiple orthogonal polynomials to the stepline,
i.e., we only use the multi-indices $(n,n)$ and $(n+1,n)$, with $p_{2n}(x)=P_{n,n}(x)$ and $p_{2n+1}(x) = P_{n+1,n}(x)$, and the multiple orthogonality is
\begin{eqnarray*}
   \int_0^\infty p_n(x) x^{\alpha+\nu/2} K_\nu(2\sqrt{x}) x^k \, dx = 0, && \qquad 0 \leq k \leq \lceil \frac{n}2 \rceil \leq -1, \\
   \int_0^\infty p_n(x) x^{\alpha+\nu/2} K_{\nu+1}(2\sqrt{x}) x^k \, dx = 0, && \qquad 0 \leq k \leq \lfloor \frac{n}2 \rfloor \leq  -1.
\end{eqnarray*}
There is an explicit hypergeometric expression for these polynomials (see \cite[Theorem 2]{ECousWVA})
\[    p_n(x) = C_n \ {}_1F_2 \left( \left. \begin{array}{c} -n \\ \alpha+1, \alpha+\nu+1 \end{array} \right| x \right) . \]
The appropriate scaling of the zeros is $n^2$, and the coefficients of $p_n(n^2x)$ are
\[      	a_{k,n} = C_{n} \binom{n}{k} \frac{n^{2n-2k}}{(\alpha+1)_{n-k}(\alpha+\nu+1)_{n-k}}, \]
so that
\[   \lim_{k/n \to t} \frac{a_{k,n}}{a_{k-1,n}} = \frac{1-t}{t} (1-t)^2. \]
The function $f$ is therefore given by
\[   f(t) =  \frac{1-t}{t} (1-t)^2, \]
and by Corollary \ref{cor:S} the $S$-transform is
\[   S(u) = \frac{1}{(1+u)^2}. \]
Hence the asymptotic distribution of the scaled zeros $\{ x_{k,n}/n^2, 1 \leq k \leq n\}$ is $\mu_1 \boxtimes \mu_1$, where $\mu_1$ is the {Marchenko-Pastur} distribution
 (on $[0,4]$).
The density of $\mu_1 \boxtimes \mu_1$ is $\frac{4}{27} w(4x/27)$ with
\[  w(x) = \frac{3 \sqrt{3}}{4\pi} \, \frac{\left(1+\sqrt{1-x}\right)^{\frac{1}{3}}-\left(1-\sqrt{1-x}\right)^{\frac{1}{3}}}{  x^{\frac{2}{3}}} , \]
 see Figure \ref{fig7}.
This was obtained in \cite[Thm. 2.7]{Cous2WVA}
\begin{figure}[ht]
\centering
\includegraphics[width=5in]{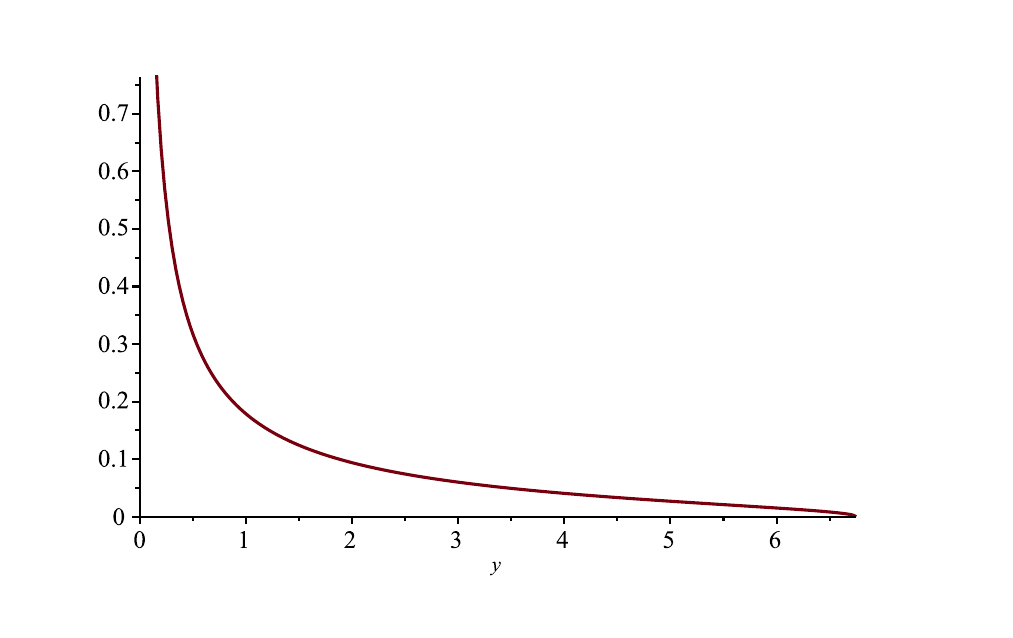}
\caption{The zero density for MOP with modified Bessel weights $(K_\nu,K_{\nu+1})$.}
\label{fig7}
\end{figure}

If $\alpha/n \to a >0$ then
\[    S(u) = \frac{1}{(1+a+u)^2}  \]
so that we get the free multiplicatve convolution $\mu_2 \boxtimes \mu_2$ where $\mu_2$ is the {Marchenko-Pastur} distribution MP$(a)$ on $[(\sqrt{a+1}-1)^2,(\sqrt{a+1}+1)^2]$.
If also $\nu/n \to b >0$ then
\[    S(u) = \frac{1}{(1+a+u)(1+a+b+u)}  \]
so that we get the free multiplicative convolution $\mu_2 \boxtimes \mu_3$ where $\mu_3$ is  the {Marchenko-Pastur} distribution MP$(a+b)$ on $[(\sqrt{a+b+1}-1)^2,(\sqrt{a+b+1}+1)^2]$. In both cases the zeros move away from the origin.

\subsection{Multiple OP with modified Bessel weights $(I_\nu,I_{\nu+1})$}
The modified Bessel function $I_\nu$ is unbounded and hence, in order to have finite moments, we need an exponential factor $e^{-cx}$, with $c>0$.
In \cite{Coussement-WVA1} multiple orthogonal polynomials were introduced for which
\begin{eqnarray*}
   \int_0^\infty  P_{n,m}(x) x^k e^{-cx} I_\nu(2\sqrt{x}) \,dx = 0,  &&  \qquad 0 \leq k \leq n-1, \\
  \int_0^\infty  P_{n,m}(x) x^k e^{-cx} I_{\nu+1}(2\sqrt{x}) \,dx = 0,  &&  \qquad 0 \leq k \leq m-1. 
\end{eqnarray*}
The polynomials with multi-indices near the diagonal have an explicit expression \cite{Coussement-WVA1}. If $p_{2n}(x) = P_{n,n}(x)$ and
$p_{2n+1}(x) = P_{n+1,n}(x)$, then
\[     p_n(x) = \sum_{k=0}^n (-1)^k \binom{n}{k} c^{-2k}  {}_2F_0\left( \begin{array}{c}   -k, \nu+n-k+1 \\  - \end{array}; -c  \right) x^{n-k}.  \]
The coefficients of these polynomials are related to Charlier polynomials $C_n(x;a)$ \cite[\S 9.14]{koekoek} since one has
\begin{equation}  \label{coefCharlier}
   {}_2F_0\left( \begin{array}{c}   -k, \nu+n-k+1 \\  - \end{array}; -c  \right) = C_k(-\nu-n+k-1;1/c) .
\end{equation}
For the asymptotic behavior of the coefficients we use the following result.

\begin{lemma}
The following limit holds
\[    \lim_{k/n \to t}   \frac{1}{n}  \frac{{}_2F_0\left( \begin{array}{c}   -k, \nu+n-k+1 \\  - \end{array}; -c  \right)}
                                                               { {}_2F_0\left( \begin{array}{c}   -k+1, \nu+n-k+2 \\  - \end{array}; -c  \right)}  = c (1-t).    \]
\end{lemma}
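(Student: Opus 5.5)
The plan is to write the terminating ${}_2F_0$ as a finite sum with positive terms and show it is dominated by its top few terms, which together contribute a factor tending to a constant. Set
\[ F_k := {}_2F_0\left( \begin{array}{c} -k, \nu+n-k+1 \\ - \end{array}; -c \right) = \sum_{j=0}^k \frac{(-k)_j(\nu+n-k+1)_j}{j!}(-c)^j = \sum_{j=0}^k \binom{k}{j} (\nu+n-k+1)_j\, c^j , \]
using $(-k)_j(-1)^j/j! = \binom{k}{j}$. All terms are positive. The denominator in the lemma is the same expression with $k$ replaced by $k-1$, since $\nu+n-(k-1)+1=\nu+n-k+2$. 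So the claim is $F_k/(nF_{k-1}) \to c(1-t)$.

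\textbf{Step 1: normalize by the top term.} I would factor out the top term $j=k$ and re-index by $i=k-j$:
\[ F_k = (\nu+n-k+1)_k\, c^k \sum_{i=0}^{k} \binom{k}{i} \frac{c^{-i}}{(\nu+n-i+1)(\nu+n-i+2)\cdots(\nu+n)} =: (\nu+n-k+1)_k\, c^k\, E_{k,n}. \]
The top terms are not individually dominant: consecutive terms have ratio of order one near $j=k$. However, for fixed $i$ the $i$-th summand tends to $(t/c)^i/i!$ as $k/n\to t$.

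\textbf{Step 2: identify the limit of $E_{k,n}$.} For $0\le t<1$, every factor $\nu+n-i+1$ with $i\le k$ is at least $\nu+n-k+1$. Hence the $i$-th summand is bounded by $\frac{1}{i!}\bigl(k/(c(\nu+n-k+1))\bigr)^i$. For $n$ large this is at most $\frac{1}{i!}\rho^i$ for some fixed $\rho$ slightly larger than $t/(c(1-t))$, which is summable. Dominated convergence (Tannery's theorem) then gives $E_{k,n}\to e^{t/c}$.

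The same argument applies to $F_{k-1}$, whose top term is $(\nu+n-k+2)_{k-1}c^{k-1}$ and whose normalized sum also tends to $e^{t/c}$, because the shift by one in both $k$ and the parameter does not change the limits. I expect this domination step to be the only real obstacle. It is where $t<1$ is used, since near $t=1$ the bound $k/(n-k)$ blows up. The endpoint $t=1$ is also not needed for applying Theorem~\ref{thm:main}, since there the limit $c(1-t)=0$ is only required in the interior. The case $t=0$ is harmless, because then $E_{k,n}\to 1$.

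\textbf{Step 3: take the ratio.} Combining the two expansions,
\[ \frac{F_k}{F_{k-1}} = \frac{(\nu+n-k+1)_k\,c^k}{(\nu+n-k+2)_{k-1}\,c^{k-1}}\cdot\frac{E_{k,n}}{E_{k-1,n}} = c\,(\nu+n-k+1)\,\frac{E_{k,n}}{E_{k-1,n}} . \]
Dividing by $n$ and letting $k/n\to t$, the first factor tends to $c(1-t)$ and the quotient of the $E$'s tends to $e^{t/c}/e^{t/c}=1$. This gives the stated limit $c(1-t)$.
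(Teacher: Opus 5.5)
Your proof is correct, and it takes a genuinely different route from the paper.

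The paper's argument works as follows. It derives a three-term recurrence in $k$ for $F_k$ (its $f(n,k)$) from the Charlier three-term recurrence and the forward and backward shift operators. It divides by $nF_{k-1}$ and lets $k/n\to t$, which gives $L^2+c(2t-1)L-c^2t(1-t)=0$. The roots are $c(1-t)$ and $-ct$, and positivity of $F_k$ selects $c(1-t)$. This presupposes that the limit $L(t)$ exists. Also, at $t=0$ positivity alone does not separate the roots $c$ and $0$.

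Your route differs in what it delivers:
\begin{itemize}
\item Normalizing by the top term $j=k$ and applying Tannery's theorem proves that the limit exists, rather than assuming it.
\item It uses no special-function identities.
\item It yields the stronger statement $F_k\sim e^{t/c}\,c^k(\nu+n-k+1)_k$. The exact ratio $c(\nu+n-k+1)$ of top terms carries all the $n$-dependence.
\end{itemize}

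What the recurrence buys is reusability. The paper applies it unchanged in the regime $c=\hat c/n$ needed for $p_n(n^2x)$. There, consecutive terms near $j=k$ have ratio of order $1/n$, and the dominant terms sit at distance of order $n$ below $j=k$. The limit then involves $\sqrt{(\hat c+1)^2-4\hat c t}$. In that regime your top-term normalization breaks down and would have to be replaced by a Laplace-type analysis around the maximal term.

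Finally, your restriction to $t<1$ is only an artifact of the crude bound $\binom{k}{i}\le k^i/i!$ together with every factor being at least $\nu+n-k+1$. Write the $i$-th summand of $E_{k,n}$ as $\frac{c^{-i}}{i!}\prod_{l=0}^{i-1}\frac{k-l}{\nu+n-l}$. Since $k\le n$, each factor of the product is at most $\max\bigl(1,\frac{1}{\nu+1}\bigr)$ for $\nu>-1$. This gives a summable majorant uniform in $k\le n$, so the same Tannery argument also covers $t=1$, where the limit is $0$.
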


\begin{proof}
Let
\[   f(n,k) =  {}_2F_0\left( \begin{array}{c}   -k, \nu+n-k+1 \\  - \end{array}; -c  \right), \]
then one has the recurrence
\begin{equation}   \label{f(n,k)rec}
    f(n,k) + (2ck-cn-c\nu-2c-1) f(n,k-1) + c^2(k-1)(-\nu-n+k-2) f(n,k-2).  
\end{equation}
This can be obtained by using the recurrence relation for Charlier polynomials, combined with the difference relations for Charlier polynomials,
or simply by using the \texttt{hypersum} command in the maple package \texttt{sumtools}. Here is a proof using Charlier polynomials.
The three term recurrence relation for the Charlier polynomials is \cite[Eq (9.14.3) on p.~247]{koekoek}
\begin{equation}  \label{Charlier3TR}
  -x C_n(x;a) = a C_{n+1}(x;a) - (n+a) C_n(x;a) + n C_{n-1}(x;a).   
\end{equation}
Take $n=k$, $a=1/c$ and $x=-\nu-n+k-1$ to find
\begin{multline*}  c(\nu+n-k+1) C_k(-\nu-n+k-1;1/c) \\
= C_{k+1}(-\nu-n+k-1;1/c) - (kc+1) C_k(-\nu-n+k-1;1/c) + kc C_{k-1}(-\nu-n+k-1;1/c). 
\end{multline*}
For  $C_{k+1}$ we use the backward shift operator \cite[Eq. (9.14.8) on p.~248]{koekoek} 
\[      C_{k+1}(-\nu-n+k;1/c) - C_{k+1}(-\nu-n+k-1;1/c) = -(k+1) c C_k(-\nu-n+k-1;1/c),  \]
and for $C_{k-1}$ we use the forward shift operator \cite[Eq. (9.14.6) on p.~248]{koekoek} 
\[     C_{k-1}(-\nu-n+k-1;1/c) - c(-\nu-n+k-1) C_{k-1}(-\nu-n+k-2;1/c) = C_k(-\nu-n+k-1;1/c).  \]
Together with the relation \eqref{coefCharlier} we then find the recurrence \eqref{f(n,k)rec}.

Divide the recurrence \eqref{f(n,k)rec} by $n f(n,k-1)$, then the limit $L(t) = \lim_{k/n \to t} \frac{1}{n} \frac{f(n,k)}{f(n,k-1)}$ satisfies
\[  L(t) + c(2t-1) - c^2t(1-t) \frac{1}{L(t)} = 0.  \]
Solving this gives
\[   L(t) = \frac{-c(2t-1) \pm c}{2}.  \]
Note that $f(n,k)$ is a positive quantity since
\[     f(n,k) = \sum_{j=0}^k \binom{k}{j} (\nu+n-k+1)_j c^j, \]
so that we need to choose the positive solution, giving $L(t) = c(1-t)$.
\end{proof}

Using this result, we see that for the scaled polynomials $p_n(nx)$ one has
\[  a_{k,n} = n^{n-k} \binom{n}{k} c^{-2k} f(n,k), \]
so that
\[    f(t) = \lim_{k/n \to t} \frac{a_{k,n}}{a_{k-1,n}} = \frac{1-t}{t} c^{-2} c(1-t) = \frac{(1-t)^2}{c t}.  \]
The $S$-transform of the asymptotic zero distribution then is equal to
\[   S(u) = - \frac{1+u}{u} \frac{1}{f(-u)} = \frac{c}{1+u}.  \]
This is, up to the factor $c$, the $S$-transform of the Marchenko-Pastur distribution on $[0,4]$, which means that the
asymptotic zero distribution is the scaled Marchenko-Pastur distribution with density
\[     w(x) = \frac{c}{2\pi} \frac{\sqrt{4-cx}}{\sqrt{cx}}, \qquad x \in [0,\frac{4}{c}].  \]
This confirms the result in \cite[Corollary 1]{Coussement-WVA2}. 

These multiple orthogonal polynomials appear in the analysis of non-intersection squared Bessel paths \cite{kuijlaars-MF-Wiel} where they
give the average of the positions of the $n$ points at a fixed time. In this application one uses a different scaling and one
needs the asymptotic distribution of the zeros of $p(n^2 x)$ with the parameter $c =\hat{c}/n$ going to zero. Using the recurrence
\eqref{f(n,k)rec} once more but with $c = \hat{c}/n$, the limit $L(t) = \lim_{k/n \to t} \frac{f(n,k)}{f(n,k-1)}$ now satisfies
\[   L(t) + 2\hat{c}t-\hat{c} -1 - \hat{c}^2 t(1-t) 1/L(t) = 0, \]
so that
\[  L(t) = \frac{-2\hat{c}t+\hat{c}+1 \pm \sqrt{(\hat{c}+1)^2-4\hat{c}t}}{2}, \]
and one needs the positive solution, which corresponds to the $+$ sign. Therefore
\[   \lim_{k/n \to t}  \frac{f(n,k)}{f(n,k-1)} =  \frac{-2\hat{c}t+\hat{c}+1 + \sqrt{(\hat{c}+1)^2-4\hat{c}t}}{2}.  \]
Hence for the scaled polynomials $p_n(n^2x)$ we find 
\[    \lim_{k/n\to t} \frac{a_{k,n}}{a_{k-1,n}} = \frac{1-t}{t} \frac{1}{\hat{c}^2}  \frac{-2\hat{c}t+\hat{c}+1 + \sqrt{(\hat{c}+1)^2-4\hat{c}t}}{2} = f(t).  \]
The $S$-transform of the asymptotic distribution of the scaled zeros is then
\[    S(u) = \frac{2\hat{c}^2}{2\hat{c} u + \hat{c}+1 + \sqrt{(\hat{c}+1)^2+4\hat{c}u}} . \]
This can be written as
\[  S(u) = \frac{2\hat{c} u + \hat{c}+1 - \sqrt{(\hat{c}+1)^2+4\hat{c}u}}{2 u(u+1)}   = \frac{1}{u+1} \frac{2\hat{c} u + \hat{c}+1 - \sqrt{(\hat{c}+1)^2+4\hat{c}u}}{2 u}.  \]

\subsection{Multiple OP with hypergeometric weights}
Lima and Loureiro \cite{Lima-Loureiro22} introduced multiple orthogonal polynomials with respect to two  hypergeometric weights.
They used the weights $w_1(x)=w(x;a,b,c,d)$ and $w_2(x)= w(x;a,b+1,c+1)$, where
\[    w(x;a,b,c,d) = x^{a-1}(1-x)^{\delta-1} \ {}_2F_1(c-b;d-b;\delta;1-x), \quad  \delta=c+d-a-b.   \]
The multiple orthogonality is
\begin{eqnarray*}
  \int_0^1 p_n(x) w(x;a,b,c,d) x^k\, dx = 0, &&  \qquad 0 \leq k \leq \lceil \frac{n}2  \rceil \leq -1, \\
   \int_0^1 p_n(x) w(x;a,b+1,c+1,d) x^k\, dx = 0, &&  \qquad 0 \leq k \leq \lfloor \frac{n}2  \rfloor \leq -1,
\end{eqnarray*}
An explicit expression for the type II multiple orthogonal polynomials is
\[  p_n(x) = C_n \ {}_3F_2 \left( \left. \begin{array}{c} -n, c+ \lfloor \frac{n}{2} \rfloor, d+ \lfloor \frac{n-1}{2} \rfloor \\
                                                                                  a, b   \end{array}  \right| x \right) ,  \]
with $C_n$ a normalizing constant. Clearly
\[    a_{k,n} = \binom{n}{k} \frac{(c+\lfloor \frac{n}2 \rfloor)_{n-k} (d+\lfloor \frac{n}2 \rfloor)_{n-k}}{(a)_{n-k} (b)_{n-k}}, \]
so that
\[   \lim_{k/n \to t} \frac{a_{k,n}}{a_{k-1,n}} = \frac{1-t}{t} \frac{(1-t)^2}{(\frac32-t)^2} = f(t). \]
The $S$-transform of the asymptotic zero distribution then becomes
\[      S(u) = \frac{(u+\frac32)^2}{(u+1)^2},   \]
which is the $S$-transform of the free multiplicative convolution  $\mu_1 \boxtimes \mu_1$ with
\begin{equation}   \label{mu1}
       \mu_1 = \frac12 \delta_1 + \frac12\   \textup{KM}(0, 1).  
\end{equation}
This is the same asymptotic distribution as for the Jacobi-Pi\~neiro polynomials with $r=2$, as was already observed in \cite[\S 3.4]{Lima-Loureiro22}.

One can also allow the parameters to depend linearly on the degree $n$, leading to other $S$-transforms.

\subsection{Multiple OP with confluent hypergeometric weights}
Earlier, Lima and Loureiro \cite{Lima-Loureiro20} had studied multiple orthogonal polynomials with two confluent hypergeometric weights.
They used the weights $w_1(x) = w(x;a;b;c)$ and $w_2(x) = w(x;a,b,c+1)$, where $a>-1$, $b>-1$, $c> \max(0,a-b)$, and
\[  w(x;a,b,c) = µ\frac{\Gamma(c)}{\Gamma(a)\Gamma(b)} x^{a-1} e^{-x} U(c-b,a-b+1;x),          \]
where $U(\alpha,\beta;x)$ is the confluent hypergeometric function 
\[     U(\alpha,\beta;x) = \frac{1}{\Gamma(\alpha)} \int_0^\infty t^{\alpha-1} (1+t)^{\beta-\alpha-1} e^{-tx}\, dt.  \]
They found an explicit expression for the type II multiple orthogonal polynomials
\[     p_n(x) = \frac{(-1)^n (a+1)_n (b+1)_n}{(c+b+1+\lfloor \frac{n+d}{2} \rfloor )_n} \ {}_2F_2\left( \begin{array}{c} -n,c+b+1+\lfloor \frac{n+d}{2} \rfloor \\
                                                                                                                                                                                                       a+1, b+1 \end{array}; x \right), \]
so that
\[    a_{k,n} = \binom{n}{k} \frac{(a+1+n-k)_k (b+1+n-k)_k}{(c+b+1+\lfloor \frac{n+d}{2}\rfloor +n-k)_k}.  \]
Now we need to look at the scaled zeros $x_{k,n}/n$, and then for the polynomial $p_n(nx)$ we find
\[    \lim_{k/n \to t} \frac{a_{k,n}}{n a_{k-1,n}} = \frac{1-t}{t} \ \frac{(1-t)^2}{\frac32-t} = f(t).  \]
The $S$-transform of the asymptotic distrubtion of the scaled zeros is thus given by
\[    S(u) = \frac{u+\frac32}{(u+1)^2},  \]
which is a product of the $S$-transform $1/(u+1)$ of the Marchenko-Pastur distribution $\mu_0$ on $[0,4]$ and the measure $\mu_1$ given in \eqref{mu1} of the previous example, hence the asymptotic distribution of the scaled zeros is the free multiplicative convolution $\mu_0 \boxtimes \mu_1$.

Again one can allow the parameters to depend linearly on the degree $n$, but we leave this as an exercise.

\subsection{Multiple orthogonal polynomials with Meijer G-function weights}
Singular values of products of Ginibre random matrices form a determinantal point process in terms of multiple orthogonal polynomials
with Meijer G-function weights, \cite{kuijlaars-zhang}. The weights $(w_0,w_1,\ldots,w_{r-1})$ on $[0,\infty)$ are given by
\[  w_k(x) = \frac{1}{2\pi i} \int_{c-\infty}^{c+i\infty} (s+\nu_1)_k \prod_{j=1}^r \Gamma(s+\nu_j) \ x^{-s}\, ds,  \]
which is a Meijer G-function
\[   w_k(x) = G_{0,r}^{r,0} \left( \begin{array}{c}  - \\ \nu_r, \nu_{r-1}, \ldots , \nu_2 , \nu_1+k  \end{array} \Bigr| x \right), \]
with parameters $\nu_1,\nu_2,\ldots,\nu_r  > -1$. The type II multiple orthogonal polynomials on the stepline satisfy the orthogonality relations
\[  \int_0^\infty P_n(x) x^k w_j(x)\ dx = 0, \qquad k=0,1,\ldots,\lceil \frac{n-j}{r} \rceil -1, \]
for $j=0,\ldots,r-1$, so that $P_n$ is the type II multiple orthogonal polynomial $P_{\vec{n}}$ with multi-index 
\[  (\underbrace{m+1,\ldots,m+1}_{j \textup{ times}},m,\ldots,m)   \]
whenever $n=mr + j$. This polynomial has the hypergeometric representation \cite[p. 768]{kuijlaars-zhang}
\[    P_n(x) = (-1)^n \prod_{j=1}^r \frac{\Gamma (n+\nu_j+1)}{\Gamma(\nu_j+1)} \  {}_1F_r \left( \begin{array}{c} -n \\ \nu_1+1,\ldots,\nu_r+1 \end{array} \Bigr| x \right). \]
The ratio of the coefficients of $p_n(n^rx)$ is therefore
\[   \frac{a_{k,n}}{a_{k-1,n}} = \frac{(n-k+1)(\nu_1+n-k+1)\cdots(\nu_r+n-k+1)}{kn^r},  \]
so that
\[   f(t) = \lim_{k/n \to t}        \frac{a_{k,n}}{a_{k-1,n}} = \frac{(1-t)^{r+1}}{t},  \]
and the $S$-transform of the asymptotic distribution of the scaled zeros $\{x_{k,n}/n^r, 1 \leq k \leq n \}$ is
\[    S(u) = \frac{1}{(u+1)^r}.   \]
The asymptotic distrubution of the scaled zeros is therefore the $r$-fold multiplicative convolution $\mu_0 \boxtimes \mu_0 \boxtimes \cdots \boxtimes \mu_0$
 of the Marchenko-Pastur distribution $\mu_0$ on $[0,4]$. This is also known as the \textit{Fuss-Catalan} distribution.
For $r=2$ one finds Example \ref{ex:3.4} again.  

If $\nu_j/n \to a_j >0$ for $1 \leq j \leq  r$ then one easily finds
\[    S(u) = \frac{1}{(1+a_1+u)(1+a_2+u)\cdots(1+a_{r}+u)}, \]
which is the $S$-transform of the $r$-fold free multiplicative convolution $\mu_1\boxtimes \mu_2 \boxtimes \cdots \boxtimes \mu_{r}$,
where $\mu_j$ is the Marchenko-Pastur distribution MP$(a_j)$ on $[(\sqrt{a_j+1}-1)^2,(\sqrt{a_j+1}+1)^2]$.

\section{Conclusion}
We have shown that one can obtain the $S$-transform of the asymptotic distribution of (scaled) zeros of various families of polynomials by means of the asymptotic behavior of its coefficients and have illustrated this with examples involving orthogonal and multiple orthogonal polynomials. The $S$-transform informs us that this limit is
often the free multiplicative convolution of known measures, such as the Marchenko-Pastur distribution or the Kesten-McKay distribution.
Unfortunately there is no explicit expression for $\mu_1 \boxtimes \mu_2$ in terms of the measures $\mu_1$ and $\mu_2$. Therefore one has to work back to the
Stieltjes transform $G$ and then use the inversion formula for Stieltjes transforms to get an explicit expression for the density. If the $S$-transform is a rational function,
as was the case in most of the examples, then the Stieltjes transform will be an algebraic function and then the Stieltjes inversion will involve a study of the 
branch points and cuts of the corresponding Riemann surface. Sometimes the algebraic equation can be solved explicitly to find the Stieltjes transform,
see for example \cite{NeuschelWVA} for the algebraic equation coming from Jacobi-Pi\~neiro polynomials and multiple Laguerre polynomials of the first kind.

\end{document}